\newtheorem{thm}{Theorem}
\newtheorem*{thm*}{Theorem}
\newtheorem{lemma}[thm]{Lemma}
\begin{document}
\newcommand{\R}{{\mathbb R}}
\newcommand{\C}{{\mathbb C}}
\newcommand{\Z}{{\mathbb Z}}
\newcommand{\Id}{{\mathbb I}}
\newcommand{\Ker}{\mathsf{Ker}}
\newcommand{\Aut}{\mathsf{Aut}}
\newcommand{\Ad}{\mathsf{Ad}}
\newcommand{\Inn}{\mathsf{Inn}}
\newcommand{\tr}{\mathsf{tr}}
\newcommand{\SL}[1]{{\mathsf{SL}}({#1})}
\newcommand{\slt}{{\SL{2}}}
\newcommand{\pslt}{{\SL{2}}}
\newcommand{\GL}[1]{{\mathsf{GL}}({#1})}
\newcommand{\PGL}[1]{{\mathsf{PGL}}({#1})}
\newcommand{\PSL}[1]{{\mathsf{PGL}}({#1})}
\newcommand{\pgl}{{\PGL{2,\Z}}}
\newcommand{\hmg}{\mathsf{Hom}(\pi ,G)}
\newcommand{\Q}{{\mathcal Q}_a}
\newcommand{\ti}{\tilde\iota}
\newcommand{\ha}{\tilde A}
\newcommand{\hb}{\tilde B}
\newcommand{\hc}{\tilde C}
\newcommand{\hd}{\tilde D}
\newcommand{\mC}{\mathcal{C}}
\newcommand{\mL}{\mathcal{L}}
\newcommand{\bmC}{\overline{\mC}}
\renewcommand{\P}{\mathbb{P}}
\renewcommand{\div}{{\mathsf{div}}}
\newcommand{\Sof}{{\mathbb{S}_{0,4}}}

\title[Character varieties and cubic surfaces]
{Affine cubic surfaces and relative $SL(2)$-character varieties 
of compact surfaces
}
\author[Goldman]{William M.~Goldman}
\address{ Mathematics Department,
University of Maryland, College Park, MD  20742 USA  }
\email{ wmg@math.umd.edu }
\author[Toledo]{Domingo ~Toledo}
\address{ Mathematics Department,
University of Utah, Salt Lake City, Utah 84112  USA  }
\email{ toledo@math.utah.edu }

\thanks{Goldman gratefully acknowledges partial support from 
National Science Foundation grant
DMS-070781.
Toledo gratefully acknowledge partial support from 
National Science Foundation grant DMS-0600816.}
\subjclass[2000]
{57M05 (Low-dimensional topology), 20H10 (Fuchsian groups and their
generalizations)}
\date{\today}
\keywords{character varieties, cubic surfaces}
\begin{abstract}
A natural family of affine cubic surfaces arises from
$\slt$-characters of the 4-holed sphere and the 1-holed torus.
The ideal locus is a tritangent plane which is generic in the
sense that the cubic curve at infinity consists of three lines
pairwise intersecting in three double points.
We show that every affine cubic surface which is smooth at infinity 
and whose ideal locus is a generic tritangent plane
arises as a relative $\slt$-character variety of the
4-holed sphere. Every such affine cubic for which all the periodic
automorphisms of the tritangent plane extend to automorphisms of the
cubic arises as a relative $\slt$-character variety of a 1-holed torus.
\end{abstract}
\maketitle
\section*{Introduction}
Various moduli problems for surface group representations in $\slt$ 
lead to complex cubic surfaces in affine 3-space $\C^3$.  There are two classical examples of the following situation.  Let $\Sigma$ be a surface with non-empty boundary.  For each connected component of the boundary fix a conjugacy class in $\slt$.   Consider the moduli space of flat $SL(2)$- bundles over $\Sigma$ whose holonomy on each component belongs to the fixed conjugacy class.   Then this moduli space is homeomorphic to an  affine cubic surfaces of the form
\begin{equation*}
x^2 + y^2 + z^2 + x y z = f(x,y,z) 
\end{equation*}
where $f(x,y,z)$ is a polynomial of degree 1 depending on the fixed conjugacy classes.  

In a similar vein, we  can restrict to
flat $\slt$-bundles 
whose  holonomy restricted  to the boundary has fixed trace.
In turn, these correspond to $\slt$-representations of $\pi_1(\Sigma)$
whose restriction to the boundary components are constrained to have fixed traces.

The cases of interest occur when $\Sigma$ is homeomorphic to either a
1-holed torus or a 4-holed sphere. When $\Sigma$ is a 1-holed torus,
then $f(x,y,z)$ is a constant, which equals the boundary trace minus $2$.  
When $\Sigma$ is a 4-holed sphere, and $a,b,c,d$ are the traces of the restriction to the four boundary components,  then
\begin{align*}
f(x,y,z) & = (a b + c d) x +  (b c + d a) y + (c a + b d) z \\
& \quad + (4 - a^2 - b^2 - c^2 - d^2 - a b c d) 
\end{align*}
  See (9) in  p.\ 298 of \cite{FrickeKlein} for this formula, and  (7) in  p.\ 301 for the one-holed torus.  
This family of cubic surfaces appears in many different contexts.  In addition to works cited below,  it also appears  in Oblomkov's work   (see Theorem 2.1 of \cite{Oblomkov}) and in recent work of  Gross, Hacking and Keel (see Ex.\ 5.12 of \cite{Gross}).

For fixed values of the boundary traces, the cubic surfaces that occur are all of the form
\begin{equation}
\label{eq:affinecubic}
x^2 + y^2 + z^2 + x y z = p x + q y + r z + s.
\end{equation}
Since the isomorphism classes of cubic surfaces depend on four parameters, it is natural to ask if all cubic surfaces occur in this way. 

This  paper has two purposes.   The first is to prove that every cubic surface of this
form (\ref{eq:affinecubic}) arises from a representation of the fundamental group of the
$4$-holed sphere in $\mathsf{SL}(2,\C)$, see Theorem \ref{thm:onto}.   This theorem may have been known classically.   
Versions in the real domain may be found in  Fricke-Klein. 
More recent statements may be found in 
Boalch~\cite{Boalch}, Cantat-Loray~\cite{CantatLoray}, and
Iwasaki~\cite{Iwasaki}.  Nevertheless we are not aware of any published proof of this result, and the proof we present here is elementary and direct.

The second purpose is to characterize cubic surfaces 
defined by (\ref{eq:affinecubic}).  
These are the affine cubic surfaces that are non-singular at infinity and that intersect the hyperplane at infinity in a {\em generic tritangent plane}.     We recall some terminology:  a {\em tritangent plane} to a cubic surface is a plane that intersects the surface in three lines.  A {\em generic tritangent plane} is a tritangent plane where the three lines are in general position.  
If a tritangent plane is not generic, then the three lines meet at a point, 
called an {\em  Eckardt point} of the cubic surface.    
We will see the classical fact that all non-singular cubic surfaces contain such a generic tritangent plane, 
so the family (\ref{eq:affinecubic}) contains 
representatives of every non-singular projective cubic surface.  

It is also classical that a singular cubic surface satisfies our two conditions if and only if its singularities are of certain types that we list below.  From this we obtain a complete list of the possible singularities of the relative character varieties, namely $A_1$, $2A_1$, $3A_1$, $4A_1$, $A_2$, $A_3$ and $D_4$. In a sequel we plan to
discuss the classification of these singularities and their interpretation
in terms of representations of the fundamental group of $\Sigma$.

\section*{Acknowledgements}
We wish to thank Philip Boalch, 
Serge Cantat, Lawrence Ein, Frank Loray and Walter Neumann for
helpful discussions.  We thank Mladen Bestvina for supplying us with a proof of the properness of the trace map, thereby greatly simplifying the first version of this paper.

\section{The Family of Cubics Surfaces}

The affine cubic surfaces of the form (\ref{eq:affinecubic})  can be characterized as follows.  The
ideal locus (intersection of the projective completion with the
hyperplane at infinity) consists of smooth points. The ideal
hyperplane is a tritangent plane which meets the surface in three
lines in general position (a {\em generic tritangent plane.)\/} Recall 
(see \cite{Segre} for details)  
that a {\em tritangent plane\/} to a cubic surface $S\subset \P^3$ is
a plane $P$ that intersect $S$ in three lines.  We say that $S\cap P$
is {\em generic\/} if $S\cap P$ consists of three distinct lines,
pairwise intersecting in three points.  The plane $P$ is then tangent
to $S$ at the three points of intersection, hence the name
tritangent plane.  If $S$ is singular, we require, in addition, 
that $P\cap S$ consists entirely of non-singular points of $S$.

A smooth cubic surface has $45$ tritangent planes.
Each tritangent plane of a generic cubic surface is generic in the
above sense.  If $P$ is not generic, then the three lines of
intersection of $P$ with $S$ go through a single point, called an {\em
Eckardt point\/} of $S$.  Let us define an Eckardt point of a smooth cubic surface $S$  to  be a point $Q\in S$ where three lines in $S$ intersect.  Then these three lines must be coplanar, since they are all tangent to $S$ at $Q$, and there can be no other lines in $S$ passing through $Q$, since otherwise we would have a plane intersecting $S$ in a curve of degree larger than $3$.  In this way, for any smooth cubic surface $S$, we get a one  to one correspondence between non-generic tritangent planes and Eckardt points.

It is classically known that the maximum number of Eckardt points is $18$, achieved exactly by the 
{\em Fermat cubic\/}
\begin{equation*}
X^3 + Y^3 + Z^3 + W^3 = 0.
\end{equation*}
See  p.152 (end of \S 100) of \cite{Segre} for a list of the surfaces with Eckardt points and the number of such points,  namely  1,2,3,4,6,9,10,18.   

Thus the generic smooth cubic surface $S$ has no Eckardt
points, and most  of the $45$ tritangent planes to any $S$ contain no
Eckardt points.  In fact, for any surface, at least $27$ of its tritangent planes are generic.   

From a different point of view, the collection of smooth surfaces with Eckardt points forms a divisor in the moduli space of smooth cubic surfaces.    This divisor is totally geodesic in the complex hyperbolic structure on this space described in \cite{ACT}.   See \S 11 of \cite{ACT} for a proof that the surfaces with Eckardt points form a totally geodesic divisor.

With this information in mind, let us return to the characterization of surfaces of the form (\ref{eq:affinecubic}).    Pick a surface $S$ with a generic tritangent plane $P$.  In a suitable homogeneous coordinate system $(X,Y,Z,W)$ for $\P^3$,
$P$ is given by the equation $W=0$
and the intersection of $S$ and $P$ described in by the equations
\begin{equation*}
X Y Z = 0, \ W =0.
\end{equation*}

An affine cubic surface whose ideal locus is a generic tritangent plane 
is therefore defined by a cubic polynomial of the form 
\begin{equation*}
x y z +  f(x,y,z) = 0
\end{equation*}
where $f(x,y,z)$ is polynomial of degree $\le 2$.
Writing 
\begin{align*}
f(x,y,z) &\; = \;
f_{11} x^2  + f_{12} xy  + f_{22} y^2 \\
&\quad  +f_{13} x z  +f_{23} yz  +f_{33} z^2   
\\ & \qquad + p x + q y + r z + s,
\end{align*}
applying a translation 
\begin{equation*}
\bmatrix x \\ y \\ z  \endbmatrix \longmapsto 
\bmatrix x - f_{23} \\ y - f_{13} \\ z-f_{12}  \endbmatrix 
\end{equation*}
eliminates the cross term and we may assume: 
\begin{equation*}
f_{12}=f_{13}=f_{23} = 0.
\end{equation*}
Furthermore if any of $f_{11},f_{22},f_{33}$ vanish, then $S$ is
singular at infinity.  Finally by applying the diagonal linear
transformation with  entries $(f_{22}f_{33})^\frac{1}{2}$, $(f_{11}f_{33})^\frac{1}{2}$, $(f_{11}f_{12})^\frac{1}{2}$, we may assume that
\begin{equation*}
f_{11}=f_{22}=f_{33}= 1. 
\end{equation*}
Hence we obtain the normal form
\begin{equation*}
x^2 + y^2 + z^2 + x y z = p x + q y + r z + s.
\end{equation*}
Thus we have proved the following theorem:

\begin{thm}
\label{thm:afinnesurfaces}
A projective cubic surface is projectively equivalent to a surface with Equation (\ref{eq:affinecubic}) if and only if it has a generic tritangent plane.   In particular, every smooth cubic surface is projectively equivalent to a surface with Equation (\ref{eq:affinecubic}) for suitable $p,q,r,s$.
\end{thm}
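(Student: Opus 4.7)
The theorem has two directions plus a corollary, and most of the work is essentially already laid out in the discussion preceding the statement. The plan is to organize that discussion into a proof, filling in the converse and the ``in particular'' clause.

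For the easy direction, I start from a surface $S \subset \P^3$ of the form (\ref{eq:affinecubic}). Homogenizing gives
\begin{equation*}
F(X,Y,Z,W) \;=\; X^2 W + Y^2 W + Z^2 W + XYZ \;-\; W^2(pX + qY + rZ) \;-\; sW^3,
\end{equation*}
and setting $W=0$ yields $XYZ = 0$, which cuts out three distinct lines in the plane $W=0$ meeting pairwise in the three coordinate points $[1\!:\!0\!:\!0\!:\!0]$, $[0\!:\!1\!:\!0\!:\!0]$, $[0\!:\!0\!:\!1\!:\!0]$. A direct check of $\nabla F$ at these three points shows each lies on $S$ as a smooth point (the partial $\partial F/\partial W$ is nonzero there), so the ideal plane is a generic tritangent plane in the sense defined earlier.

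For the converse, suppose $S$ has a generic tritangent plane $P$. The idea is to move $P$ to infinity by a projective transformation, then reduce the resulting affine equation to normal form as in the preamble. Concretely, choose homogeneous coordinates $(X,Y,Z,W)$ on $\P^3$ so that $P = \{W = 0\}$ and the three lines of $S \cap P$ are $\{X=0,W=0\}$, $\{Y=0,W=0\}$, $\{Z=0,W=0\}$; this is possible because the three lines lie in $P \cong \P^2$ in general position. The defining cubic $F$ then satisfies $F|_{W=0} = \lambda XYZ$ for some scalar $\lambda \ne 0$, which after rescaling we may take to be $1$. Consequently $F = XYZ + W\cdot G(X,Y,Z,W)$ with $G$ homogeneous of degree $2$. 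Dehomogenizing by $W=1$ gives the affine equation $xyz + g(x,y,z) = 0$ with $\deg g \le 2$, which is exactly the starting point of the reduction performed in the text: after the translation that kills the cross terms $f_{12}, f_{13}, f_{23}$, the smoothness of $S$ at the three ideal points forces $f_{11}, f_{22}, f_{33}$ all to be nonzero, and the diagonal scaling normalizes them to $1$, yielding (\ref{eq:affinecubic}).

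For the ``in particular'' clause, I invoke the classical fact recalled earlier: every smooth cubic surface in $\P^3$ has $45$ tritangent planes, of which at least $27$ are generic (equivalently, the surface has at most $18$ Eckardt points, realized only by the Fermat cubic). So any smooth $S$ admits a generic tritangent plane, and the first part applies.

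The only genuinely delicate step is checking that the translation and diagonal scaling can be carried out, i.e.\ that $f_{11}, f_{22}, f_{33}$ are all nonzero; but if any one of them vanished, the corresponding ideal point $[1\!:\!0\!:\!0\!:\!0]$, $[0\!:\!1\!:\!0\!:\!0]$, or $[0\!:\!0\!:\!1\!:\!0]$ would be a singular point of $S$, contradicting the genericity hypothesis (which by definition requires these three intersection points to be smooth points of $S$). This is the one place where the full strength of ``generic tritangent plane''---namely that the three intersection points are smooth even when $S$ itself is singular---is used. Everything else is linear algebra.
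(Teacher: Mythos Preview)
Your argument is correct and follows exactly the route the paper takes: the paper's ``proof'' is precisely the normalization discussion you reproduce, and you have simply made explicit the easy direction and the ``in particular'' clause, both of which the paper leaves implicit.

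One small point in the easy direction: the paper's definition of a generic tritangent plane for a possibly singular $S$ requires that \emph{all} of $P\cap S$ consist of smooth points, not just the three intersection points. Your gradient check at $[1\!:\!0\!:\!0\!:\!0]$, $[0\!:\!1\!:\!0\!:\!0]$, $[0\!:\!0\!:\!1\!:\!0]$ does not literally cover this, though the full verification is immediate: at any $[X\!:\!Y\!:\!Z\!:\!0]$ with $XYZ=0$ the gradient of $F$ restricts to $(YZ,\,XZ,\,XY,\,X^2+Y^2+Z^2)$, and the first three entries force two of $X,Y,Z$ to vanish while the last then forces the third, so no singular point of $S$ lies on $P$. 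With that one line added, the proof is complete and identical in spirit to the paper's.
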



\section{Surjectivity of the trace map}

The relative character variety of the $4$-holed sphere 
$\Sof$ is the restriction of the projection
\begin{align*}
V & \longrightarrow \C^4 \\
\bmatrix a \\ b \\ c \\ d \\ x \\ y \\z
\endbmatrix & \longmapsto \bmatrix a \\ b \\ c \\ d \endbmatrix
\end{align*}
to the hypersurface $V\subset\C^7$ defined by
\begin{align}
\label{eq:cub4holedSp}
x^2 + y^2 + z^2 + x y z  & =\; p(a,b,c,d)\, x \\ 
& \quad +\; q(a,b,c,d) \,y \notag\\
& \qquad  + \;r(a,b,c,d) \,z \notag \\
& \qquad \quad  +\; s(a,b,c,d) \notag 
\end{align}
where
\begin{align}\label {eq:linearcoeffs}
p(a,b,c,d) & = a b + c d  \\ 
q(a,b,c,d) & = b c + d a  \notag\\ 
r(a,b,c,d) & = c a + b d \notag\\ 
s(a,b,c,d) & = 4 - a^2 - b^2 - c^2 - d^2 - a b c d \notag
\end{align}
We show that every  affine cubic of the form 
\eqref{eq:cub4holedSp}
arises as a relative $\slt$-character variety of a $4$-holed sphere
for some choice of boundary traces $(a,b,c,d)$ That is,
\begin{thm}\label{thm:onto}
The mapping
\begin{align*}
\C^4 & \xrightarrow{\Phi}  \C^4 \\
\bmatrix a \\ b \\ c \\ d \endbmatrix &\longmapsto
\bmatrix 
p(a,b,c,d) \\q(a,b,c,d) \\r(a,b,c,d) \\s(a,b,c,d) \endbmatrix 
\end{align*}
is onto.
\end{thm}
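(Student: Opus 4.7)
The strategy is to show that $\Phi$ is proper, from which surjectivity follows by a standard dimension-plus-closedness argument.

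\textbf{Properness.} I would show $|\Phi(a,b,c,d)| \to \infty$ whenever $|(a,b,c,d)| \to \infty$. Given such a sequence $(a_n,b_n,c_n,d_n)$, after passing to a subsequence and permuting coordinates, I may assume $|a_n| = \max(|a_n|,|b_n|,|c_n|,|d_n|) \to \infty$. Set $\beta_n = b_n/a_n$, $\gamma_n = c_n/a_n$, $\delta_n = d_n/a_n$, all in the closed unit disk. If $\Phi(a_n,b_n,c_n,d_n)$ remained bounded, dividing the first three coordinates of $\Phi$ by $a_n^2$ and the fourth by $a_n^4$ would force
\begin{equation*}
\beta_n + \gamma_n\delta_n,\; \beta_n\gamma_n + \delta_n,\; \gamma_n + \beta_n\delta_n,\; \beta_n\gamma_n\delta_n \longrightarrow 0.
\end{equation*}
An elementary case analysis on which of $\beta,\gamma,\delta$ vanishes shows the only common zero of these four polynomials in the closed polydisk is the origin, so $\beta_n, \gamma_n, \delta_n \to 0$. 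Reusing the three quadratic relations then upgrades this to a quantitative bound: substituting $\beta_n = -\gamma_n\delta_n + O(|a_n|^{-2})$ into the second and third relations yields $\delta_n(1-\gamma_n^2), \gamma_n(1-\delta_n^2) = O(|a_n|^{-2})$, hence $|b_n|, |c_n|, |d_n| = O(|a_n|^{-1})$. But then $b_n^2 + c_n^2 + d_n^2 + a_n b_n c_n d_n = o(1)$, so $s_n = 4 - a_n^2 + o(1)$ satisfies $|s_n| \to \infty$, contradicting boundedness.

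\textbf{From properness to surjectivity.} Every fibre of $\Phi$ is then a compact algebraic subset of $\C^4$, and such a subset must be finite, since any positive-dimensional irreducible affine variety is unbounded. Hence $\Phi$ is dominant with $4$-dimensional image. The image $\Phi(\C^4)$ is constructible by Chevalley's theorem and closed in the Euclidean topology by properness, so it is Zariski closed. A closed $4$-dimensional algebraic subset of $\C^4$ coincides with $\C^4$.

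\textbf{Main obstacle.} The delicate point is the quantitative leading-term analysis. The equations $\beta + \gamma\delta = \beta\gamma + \delta = \gamma + \beta\delta = 0$ alone admit nontrivial solutions such as $(1,1,-1)$, so the extra vanishing $\beta\gamma\delta = 0$ extracted from boundedness of $s$ is indispensable. Balancing the three quadratic coordinates $(p,q,r)$ against the quartic coordinate $s$, and iterating the relations to extract the sharp decay rate $|b_n|, |c_n|, |d_n| = O(|a_n|^{-1})$ — just fast enough that the cross term $abcd$ in $s$ cannot cancel the $-a^2$ growth — is where the real work lies.
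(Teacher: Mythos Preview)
Your argument is correct and takes a genuinely different route from the paper in both halves.

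\textbf{Properness.} The paper exploits the factorizations
\[
p\pm q=(a\mp c)(b\pm d),\quad q\pm r=(b\mp a)(c\pm d),\quad r\pm p=(a\mp d)(b\pm c),
\]
to show first that bounded $p,q,r$ forces three of $a,b,c,d$ to cluster, and then runs a case analysis on the position of the fourth variable, using boundedness of $s$ to rule out escape. Your rescaling argument is more systematic: normalizing by the largest coordinate and passing to the limit system $\beta+\gamma\delta=\beta\gamma+\delta=\gamma+\beta\delta=\beta\gamma\delta=0$ in the polydisk is a clean way to organize the same information, and the bootstrap to $|b|,|c|,|d|=O(|a|^{-1})$ is exactly what is needed to isolate the $-a^2$ term in $s$. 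One small point worth making explicit is that permuting $(a,b,c,d)$ only permutes $(p,q,r)$ and fixes $s$, so your reduction to $|a|=\max$ is legitimate.

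\textbf{From properness to surjectivity.} Here the contrast is sharper. The paper invokes topological degree: a proper holomorphic map $\C^4\to\C^4$ has a well-defined degree, nonnegative by holomorphicity, and positive (hence the map is onto) as soon as the image has nonempty interior; the latter is checked by computing the Jacobian determinant at a single point. Your route---proper implies finite fibres, hence dominant by fibre dimension, and a Euclidean-closed constructible set is Zariski closed---avoids any pointwise computation and stays within algebraic geometry. The paper's method is more hands-on and produces an explicit regular value; yours is more structural and would transport unchanged to other proper polynomial maps.
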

\noindent
Theorem~\ref{thm:onto} characterizes affine cubic surfaces arising from
the four-holed sphere. Here is a characterization of those affine
cubic surfaces arising from the one-holed torus.

\begin{thm}\label{thm:torus}
Let $V_{p,q,r,s}$ denote the affine cubic defined by \eqref{eq:cub4holedSp}.
If the finite group of automorphisms
\begin{equation*}
\Delta \; := \; 
\bigg\{ \bmatrix \pm 1 & 0 & 0 \\ 0 & \pm 1 & 0 \\ 0 & 0 & \pm 1
\endbmatrix \bigg\} \,\bigcap\, \mathsf{SL}(3)
\end{equation*}
preserves $V_{p,q,r,s}$, then $p = q = r = 0$. In that case $V_{0,0,0,s}$ 
corresponds to a relative $\slt$-character variety of a 1-holed torus with 
boundary trace $s + 2$.
\end{thm}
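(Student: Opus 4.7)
I would prove the theorem in two independent steps. The first is the algebraic claim that $\Delta$-invariance of $V_{p,q,r,s}$ forces $p=q=r=0$. The second --- that $V_{0,0,0,s}$ is the relative character variety of a one-holed torus with boundary trace $s+2$ --- is essentially immediate from the Fricke--Klein trace identity already quoted in the introduction. The bulk of the work lies in the first step.

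For the first step, I would enumerate the three non-identity elements of $\Delta$ as $\sigma_1 = \mathrm{diag}(1,-1,-1)$, $\sigma_2 = \mathrm{diag}(-1,1,-1)$, $\sigma_3 = \mathrm{diag}(-1,-1,1)$, and observe that each lies in $\mathsf{SL}(3)$ --- so fixes the cubic $xyz$ --- and is a sign-flip involution --- so fixes the quadratic $x^2+y^2+z^2$. Setting $F := x^2+y^2+z^2+xyz-(px+qy+rz+s)$, for every $P=(x_0,y_0,z_0)\in V_{p,q,r,s}$ invariance gives $\sigma_i(P)\in V_{p,q,r,s}$ as well, and subtracting the identities $F(P)=0$ and $F(\sigma_i(P))=0$ cancels all terms of degree $\geq 2$; for $\sigma_1$ one is left with $qy_0+rz_0=0$. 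Thus the linear form $L:=qy+rz$ vanishes identically on $V_{p,q,r,s}$. If $L\neq 0$, then $F$ would be divisible by $L$, and matching degree-$3$ parts would force $(qy+rz)\cdot Q = xyz$ for some quadratic $Q$; but by uniqueness of the factorization $xyz=x\cdot y\cdot z$, this means $L$ is proportional to one of $x$, $y$, $z$, giving either $L=qy$ or $L=rz$. Each case is immediately contradicted by evaluating $F$ on the hyperplane $y=0$ or $z=0$, where $F$ is visibly nonzero (the monomials $x^2,z^2$ or $x^2,y^2$ survive). So $q=r=0$, and running the parallel analysis for $\sigma_2,\sigma_3$ yields $p=q=r=0$.

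For the second step, I would appeal to the Fricke--Klein identity in the form recalled in the introduction: for the one-holed torus, the relative $\slt$-character variety at boundary trace $t$ is cut out by $x^2+y^2+z^2+xyz = t-2$. This is exactly $V_{0,0,0,s}$ with $s=t-2$, so the boundary trace equals $t=s+2$, as claimed. Surjectivity of the classical trace map $\chi\mapsto(\tr\chi(X),\tr\chi(Y),\tr\chi(XY)):\slt\times\slt\to\C^3$ ensures that every point of $V_{0,0,0,s}$ is realized by an actual representation, so the identification is not merely scheme-theoretic but really a parametrization of the relative character variety.

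The step I expect to be the main obstacle is the argument in the first part that $L=qy+rz$ cannot vanish identically on $V_{p,q,r,s}$ unless it is the zero form. This is the only place where one must look past the leading part of $F$ and exploit its explicit monomial structure, and it is what turns a purely formal term-by-term comparison into a rigorous proof. Everything else --- the action of $\Delta$ on the graded pieces of $F$ and the invocation of Fricke --- is routine bookkeeping.
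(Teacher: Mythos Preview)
Your argument is correct. The paper's proof of the first step is a one-liner: pulling back the defining polynomial $F_{p,q,r,s}$ by an element of $\Delta$ yields $F_{\pm p,\pm q,\pm r,s}$, so $\Delta$-invariance of the \emph{variety} is equivalent to $(p,q,r)$ being fixed under all the relevant sign changes, i.e.\ $p=q=r=0$. That step tacitly uses that distinct parameter values $(p,q,r,s)$ cut out distinct hypersurfaces --- equivalently, that no nonzero linear form can vanish identically on $V_{p,q,r,s}$. This is exactly the fact you isolate and prove via your divisibility argument with $L=qy+rz$. So your route is really the explicit unpacking of what the paper leaves to the reader; you buy a fully self-contained proof at the cost of a few extra lines. (A marginally quicker variant of your step: from $L$ vanishing on $V(F)$, the Nullstellensatz gives $F\mid L^n$, and since $L$ is irreducible this forces $F=cL^3$; matching degree-$3$ parts then asks $c(qy+rz)^3=xyz$, which is impossible as the left side contains no $x$.)

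For the second step you read off the boundary trace $s+2$ directly from the introduction's Fricke--Klein identity, whereas the paper first applies the sign change $(x,y,z)\mapsto(-x,-y,-z)$ and then cites the ``standard form'' of the one-holed torus character variety. The two are equivalent identifications.
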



\begin{lemma}\label{lem:proper}
The map $\Phi$ is proper.
\end{lemma}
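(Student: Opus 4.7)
The plan is a proof by contradiction via rescaling. Suppose $(a_n,b_n,c_n,d_n)\in\C^4$ is a sequence with $M_n := \max(|a_n|,|b_n|,|c_n|,|d_n|)\to\infty$ while $\Phi(a_n,b_n,c_n,d_n)$ stays bounded, say by a constant $K$. Since permuting the entries of $(a,b,c,d)$ only permutes $(p,q,r)$ and preserves $s$, boundedness of $\Phi$ is invariant under relabeling, so after passing to a subsequence I may assume $|a_n|=M_n$ for all $n$. Set $N_n := \max(|b_n|,|c_n|,|d_n|)\leq M_n$.

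The crucial step is a rescaling argument. Passing to a further subsequence, $(a_n,b_n,c_n,d_n)/M_n$ converges to some $(\alpha,\beta,\gamma,\delta)\in\C^4$ with $|\alpha|=1$. Dividing the expressions for $p_n,q_n,r_n$ by $M_n^2$ gives in the limit
\[
\alpha\beta+\gamma\delta \;=\; \beta\gamma+\alpha\delta \;=\; \alpha\gamma+\beta\delta \;=\; 0,
\]
and dividing $s_n$ by $M_n^4$, where the quartic $abcd$-term dominates, gives $\alpha\beta\gamma\delta=0$. A short case analysis (say $\delta=0$: then $\alpha\beta=0$ and $\alpha\gamma=0$, forcing $\beta=\gamma=0$ since $\alpha\ne 0$; symmetric conclusions if instead $\beta=0$ or $\gamma=0$) shows these four conditions force three of $\alpha,\beta,\gamma,\delta$ to vanish, so $\beta=\gamma=\delta=0$ and $N_n=o(M_n)$.

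Next I sharpen this to $N_n\to 0$. The reverse triangle inequality applied to $|p_n|,|q_n|,|r_n|\leq K$ gives
\[
M_n|b_n|\;\leq\;K+|c_n||d_n|, \qquad M_n|c_n|\;\leq\;K+|b_n||d_n|, \qquad M_n|d_n|\;\leq\;K+|b_n||c_n|,
\]
so $M_nN_n\leq K+N_n^2$, i.e.\ $N_n(M_n-N_n)\leq K$. Since $N_n=o(M_n)$ gives $M_n-N_n\geq M_n/2$ eventually, this forces $N_n\leq 2K/M_n\to 0$.

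Finally, since $N_n\to 0$ and the products $a_nb_n,\,a_nc_n,\,a_nd_n$ remain bounded (each by $K+N_n^2$), we have $a_nb_nc_nd_n=(a_nb_n)c_nd_n\to 0$ and $b_n^2+c_n^2+d_n^2\to 0$; the $s$-equation then yields
\[
a_n^2 \;=\; 4-s_n-(b_n^2+c_n^2+d_n^2)-a_nb_nc_nd_n,
\]
which is bounded, contradicting $M_n\to\infty$. The principal obstacle is the rescaling step: the quartic constraint $\alpha\beta\gamma\delta=0$ coming from the higher-order $abcd$-term in $s$ is indispensable, since the three bilinear relations alone admit nonzero solutions such as $(1,1,1,-1)$ that would not force $\beta=\gamma=\delta=0$.
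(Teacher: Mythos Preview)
Your proof is correct. The rescaling argument is clean: normalizing by $M_n$, extracting a convergent subsequence, and using the limiting relations $\alpha\beta+\gamma\delta=\beta\gamma+\alpha\delta=\alpha\gamma+\beta\delta=0$ together with the quartic constraint $\alpha\beta\gamma\delta=0$ (from the $abcd$-term in $s$) to force $\beta=\gamma=\delta=0$ is exactly right, and you correctly note that without the quartic constraint the bilinear relations alone would not suffice. The bootstrap $N_n(M_n-N_n)\le K$ and the final use of the $s$-equation are also fine.

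The paper takes a different route. It observes the factorizations
\[
p-q=(a-c)(b-d),\qquad q-r=(b-a)(c-d),\qquad r-p=(a-d)(b-c),
\]
and the analogous sum identities $p+q=(a+c)(b+d)$, etc., and uses these directly: from bounded $p,q,r$ one gets that at least three of $a,b,c,d$ lie within bounded distance of one another, and then a short case analysis (is the fourth close or far? is $|a+d|$ bounded or not?) combined with the growth of $s$ yields the contradiction. Your compactness/rescaling method is more systematic and would transport to other polynomial maps with similar homogeneity structure; the paper's argument exploits the special bilinear factorizations of $p\pm q$, $q\pm r$, $r\pm p$ and is in that sense more explicit about the algebra underlying $\Phi$. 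Both arguments ultimately hinge on the $s$-coordinate to rule out the remaining unbounded direction.
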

\noindent
The proof of 
Lemma~\ref{lem:proper}
is based on the following observations:
Differences of linear coefficients $p,q,r$ factor as follows:
\begin{align}\label{eq:differences}
p - q & \,=\, (a - c)(b - d) \notag \\ 
q - r & \,=\, (b - a)(c - d)  \\ 
r - p & \,=\, (a - d)(b - c). \notag
\end{align}
Sums of linear coefficients $p,q,r$ likewise factor:
\begin{align}\label{eq:sums}
p + q & \,=\, (a + c)(b + d) \notag \\ 
q + r & \,=\, (b + a)(c + d)  \\ 
r + p & \,=\, (a + d)(b + c) \notag
\end{align}

\begin{lemma}\label{lem:three} 
Suppose $p(a,b,c,d),\ q(a,b,c,d)$ and $ r(a,b,c,d)$ are bounded.
Then at least three of $a,b,c,d$ are within bounded distance of each other.
\end{lemma}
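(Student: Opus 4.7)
The plan is to argue by contradiction, exploiting the three factorizations \eqref{eq:differences} together with a triangle-inequality closure argument. Suppose the conclusion fails: then I can find a sequence $(a_n,b_n,c_n,d_n)\in \C^4$ along which $p,q,r$ remain bounded but every three-element subset of $\{a_n,b_n,c_n,d_n\}$ contains a pair with unbounded distance. After passing to a subsequence I may assume that, for each of the six unordered pairs of coordinates, the corresponding difference either stays bounded or has modulus tending to $\infty$.

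Next I would introduce the graph $G$ on the vertex set $\{a,b,c,d\}$ whose edges are the pairs with bounded difference along this subsequence. The triangle inequality $|x-z|\leq |x-y|+|y-z|$ shows that ``bounded difference'' is transitive, so $G$ is an equivalence relation on $\{a,b,c,d\}$, hence a disjoint union of cliques. Our standing assumption says $G$ is triangle-free, which forces every clique to have size at most $2$; thus $G$ has at most two edges, and those two edges are disjoint.

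To derive the contradiction, I would use boundedness of $p-q,\ q-r,\ r-p$ together with \eqref{eq:differences}: these three differences equal the three products
\begin{equation*}
(a-c)(b-d),\quad (b-a)(c-d),\quad (a-d)(b-c),
\end{equation*}
which are exactly the products over the three perfect matchings of $K_4$ on $\{a,b,c,d\}$. For each matching, if neither of its two edges lay in $G$, then both corresponding differences would blow up along the subsequence and so would their product, contradicting boundedness. Thus $G$ contains at least one edge from each of the three matchings; since those matchings partition the edge set of $K_4$, this yields at least three distinct edges in $G$, contradicting the upper bound of two from the previous paragraph.

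The real content of the argument, and the step I expect to carry the weight, is the observation that ``bounded difference'' is transitive: this collapses the possible structure of $G$ into a disjoint union of cliques and makes the two edge counts (at most $2$ versus at least $3$) incompatible. Beyond this, the combinatorics is forced by the fortuitous match between the three factorizations in \eqref{eq:differences} and the three perfect matchings of $K_4$; note in particular that the sum identities \eqref{eq:sums} are not needed for this lemma, though they will presumably be used in subsequent refinements.
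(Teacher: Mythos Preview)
Your proof is correct and rests on exactly the same two ingredients as the paper's---the factorizations \eqref{eq:differences} and the triangle inequality---though the paper argues directly rather than by contradiction: from $|(a-c)(b-d)|\le 2C$ one factor, say $|a-c|$, is at most $\sqrt{2C}$; then from $|(a-b)(c-d)|\le 2C$ one of $|a-b|$, $|c-d|$ is small, and in either case a single triangle inequality produces three mutually close points. Your matching-of-$K_4$ packaging is more symmetric but less elementary, and in fact only two of the three identities in \eqref{eq:differences} are needed; your observation that \eqref{eq:sums} plays no role here is correct.
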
 
\begin{proof}
Suppose $|p|, \ |q|, \ |r|\le C$ for some positive constant $C$.
Then the first equation in  \eqref{eq:differences} implies  
\begin{equation*}
|a -c||b-d|\le 2C.
\end{equation*} 
Thus at least one of the factors is $\le C'$, where $C' = \sqrt{2C}$.   Suppose $|c-a|\le C'$.  Then the second equation in 
\eqref{eq:differences} implies $|b -a |$ or $|c - d|$ is $\le C'$.   
Suppose $|b -a| \le C'$.  
By the triangle inequality,
\begin{equation*}
|b - c| \le C'' = 2C'.
\end{equation*}   
Thus the three points $a,b,c$ are within distance $C''$ of each other.
The proof of Lemma~\ref{lem:three} is complete.
\end{proof}

\begin{proof}[Proof of Lemma~\ref{lem:proper}] 
Suppose that $p,q,r,s$ are bounded:  
\begin{equation*}
|p|, \ |q|, \ |r|, \ |s| \  \le C
\end{equation*}
for some constant $C >0$.    
In this proof  $C$ denotes a constant that can vary from line to line, 
and depends on the the previous constants.   
By Lemma~\ref{lem:three},  three of $a,b,c,d$ are within bounded distance of each other.   
We assume that $a,b,c$  
are within bounded distance of each other.
The remaining three cases being identical to this one.
We must show that $\vert a\vert, \vert b\vert, \vert c\vert, \vert d\vert$
are within bounded distance of each other.



First suppose that $d$ is within bounded distance of $a$. 
Then 
$d$ is within bounded distance of $a,\  b$ and $c$.  
Assume all distances between any two of $a,b,c,d$ is $\le C$.   
Equations (\ref{eq:sums}) immediately bound at least $3$ of the quantities $|a + b||$, $|b+d|$, etc.  
Pick any one of them, say $|b + d|$. 
Then, since $|b -d |$ is bounded, $|b|$ is bounded.  
Since all pairwise distances are bounded, $|a|$, $|b|$, $|c|$, $|d|$ 
are all $\le C$, as desired.



Thus we may assume that $|a - d|$ is unbounded. 
Then so are $|b - d|$ and $|c - d|$.   
The proof that this cannot divides into two cases: 
\begin{itemize}
\item  $|a + d|$ is bounded. 
\item  $|a + d|$ is unbounded.  
\end{itemize}


\noindent
Suppose the first case, that is, when $|a + d|$ is bounded. 
Then $|b + d|$ and $|c+d|$ also remain bounded.  
Moreover,  $d$ must also be unbounded, since otherwise 
$$
|a - d| = |(a +d) - 2d|
$$ 
would be bounded.    Then the difference between the function 
\begin{equation*}
s(a,b,c,d) \ =\  4 - a^2 -b^2 - c^2 - abcd
\end{equation*}  
and the quartic polynomial $s(-d, -d, -d, d) $ in $d$ can be estimated by a polynomial of degree $3$ in $|d|$   
Thus 
\begin{equation*}
|s(a,b,c,d)| \ge C|d|^4.
\end{equation*} 
In particular $|s|$  is unbounded, contrary to hypothesis.


Finally, consider the  case
when $|a + d|$ is unbounded.  
Then $|b+d|$ and $|c+d|$ are also unbounded.  
Equations \eqref{eq:sums} bound all three quantities
$$
|a+b|, |b + c|, | a + c |.
$$  
Therefore $|a|$, $|b|$ and $| c|$ are all bounded, 
but $|d|$ is unbounded.   
So, for large $d$, the difference between the  polynomial $s(a,b,c,d)$
and the quadratic polynomial $s(0,0,0,d)$ can be estimated by a linear polynomial in $|d|$.   
Thus 
$$
|s (a,b,c,d)|\ge C|d|^2,
$$
hence $|s|$ is unbounded, again contrary to hypothesis.
The proof of Lemma~\ref{lem:proper} is complete.
\end{proof}

\begin{proof}[Conclusion of proof of Theorem~\ref{thm:onto}]
Since $\Phi$ is proper,
 its degree $\deg(\Phi)$ is defined.  
Since $\Phi$ is holomorphic,   $\deg(\Phi)\ge 0$.
Moreover $\deg(\Phi) > 0$ 
if and only the image of $\Phi$ contains an open set.
This occurs if and only if $\Phi$ is surjective.   
To check that the image contains an open set, use the inverse function theorem:  pick a point where the Jacobian determinant of $\Phi\ne 0$. 
For example, at the point 
$$
a=b-c=1, d=0,
$$ 
the Jacobian determinant is $-4\ne 0$.
\end{proof}

\subsection*{The more symmetric case}

\begin{proof}[Proof of Theorem~\ref{thm:torus}]
The finite group $\Delta$ of automorphisms of the tritangent plane
acts on $V$ by mapping 
\begin{align*}
p & \longmapsto \pm p \\ 
q & \longmapsto \pm q \\ 
r & \longmapsto \pm r  
\end{align*}
so that $V$ is $\Delta$-invariant $\Longleftrightarrow$ $p = q = r = 0$.
Then $V$ is of the form 
\begin{equation*}
x^2 +  y^2 +  z^2 +  x y z = s
\end{equation*}
which is equivalent under the linear change of variables
\begin{equation*}
\bmatrix x \\ y \\ z \endbmatrix  \longmapsto
\bmatrix -x \\ -y \\ - z \endbmatrix  
\end{equation*}
to the standard form of the $\slt$-character variety
of the 1-holed torus with boundary trace $s$. 
(Compare Goldman~\cite{Traces}.)
The proof of Theorem~\ref{thm:torus} is complete.
\end{proof}

%
%

\noindent
In terms of the boundary traces $a,b,c,d$ there are exactly two cases
in which this arises. 

\begin{thm}\label{thm:nolinearterms}
Let $p(a,b,c,d),q(a,b,c,d),r(a,b,c,d)$ 
be defined as in \eqref{eq:linearcoeffs}.
Then 
\begin{equation*}
p(a,b,c,d)=q(a,b,c,d)=r(a,b,c,d)=0  
\end{equation*}
if and only if (up to permutations of the variables
$a,b,c,d$) one of the two cases occurs:
\begin{itemize}
\item $a = b = c = 0$;
\item $a = b = c = -d$.
\end{itemize}
\end{thm}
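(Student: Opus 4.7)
My plan is to build directly on the factorizations of the differences $p-q$, $q-r$, $r-p$ already recorded in equation \eqref{eq:differences}, and to exploit a simple combinatorial feature of those factorizations: the three differences involve exactly the three perfect matchings of the set $\{a,b,c,d\}$ into pairs. Explicitly, $(a-c)(b-d)$ uses the matching $\{a,c\}\{b,d\}$, $(a-b)(c-d)$ uses $\{a,b\}\{c,d\}$, and $(a-d)(b-c)$ uses $\{a,d\}\{b,c\}$. Since $p=q=r=0$ makes all three differences vanish, each of the three matchings contributes an equality among the variables.

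The main step is then a finite case check: choosing one equality from each of the three factored products yields $2^3=8$ combinations. In each of these combinations, at least three of $a,b,c,d$ must coincide. For example, selecting $a=c$, $c=d$, $a=d$ forces $a=c=d$; selecting $a=c$, $a=b$, $a=d$ forces all four to agree; and so on through the remaining cases. A short enumeration confirms that every branch collapses to ``three of the four variables are equal.'' This is the only step that needs care, and it is routine.

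Having established that (after permuting) $a=b=c=:\lambda$, I would substitute into the linear coefficients:
\begin{equation*}
p = ab+cd = \lambda^{2} + \lambda d = \lambda(\lambda+d),
\end{equation*}
and by symmetry $q = bc+ad = \lambda(\lambda+d)$ and $r = ca+bd = \lambda(\lambda+d)$. Hence $p=q=r=0$ if and only if $\lambda = 0$ or $\lambda = -d$. The first alternative gives $a=b=c=0$ with $d$ unconstrained, and the second gives $a=b=c=-d$, exactly the two cases stated. The reverse implication is immediate by direct substitution, which completes the proof. No real obstacle is anticipated; the only pitfall is being systematic in the eight-case enumeration so that no configuration of equalities is missed.
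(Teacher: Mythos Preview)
Your argument is correct and takes a genuinely different route from the paper's. The paper first isolates the case where one of $a,b,c,d$ vanishes via a short lemma (showing that then at least three must vanish), and in the remaining case where all four are nonzero it divides the equations $p=0$ and $q=0$ to obtain $a/c = -d/b = c/a$, whence $a=\pm c$, and similarly for the other pairs; a final sign check rules out the configurations with two equal pairs or all four equal. Your approach instead recycles the difference factorizations \eqref{eq:differences}, already established for the properness argument, to deduce in one stroke that three of the four variables coincide, with no need to separate the vanishing and nonvanishing cases. This is a bit more economical and ties the theorem to machinery already in the paper; the paper's ratio argument, on the other hand, is self-contained and does not rely on \eqref{eq:differences}. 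Either way the endgame is the same substitution $a=b=c=\lambda$ leading to $p=q=r=\lambda(\lambda+d)$.
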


\begin{proof}
We start with the following simple observation.
\begin{lemma}
Suppose 
$$
p(a,b,c,d)\ =\ q(a,b,c,d)\ =\ r(a,b,c,d)\ =\ 0.
$$ 
If one of $a,b,c,d$ vanishes, 
then at least three of $a,b,c,d$ vanish.
\end{lemma}
\begin{proof}
Suppose that $a=0$. Then:
\begin{align*}
p(a,b,c,d)=0 &\Longrightarrow cd =0, \\
q(a,b,c,d)=0&\Longrightarrow bc =0, \\
r(a,b,c,d)=0&\Longrightarrow bd =0
\end{align*}
Thus at least two of $b,c,d$ must also vanish.
\end{proof}

Thus to prove Theorem~\ref{thm:nolinearterms}, we may assume that
all $a,b,c,d$ are nonzero. Then
\begin{equation*}
\frac{a}{c} = - \frac{d}{b} = \frac{c}{a} 
\end{equation*}
since $p(a,b,c,d)=0$ and $q(a,b,c,d)=0$ respectively. 
Thus $a/c$ equals its reciprocal,
and hence equals $\pm 1$. 
Thus $b = \pm a$. 
Similarly $c = \pm a$ and $d = \pm a$. 
If $a=b=c=d$, or if $a,b,c,d$ fall into two equal pairs,
then one of $p,q,r$ will be nonzero. Thus three of $a,b,c,d$ are equal
and the other one equals the negative. The proof of
Theorem~\ref{thm:nolinearterms} is complete.
\end{proof}
\noindent
The first case, when $a = b = c = 0$, may be understood in terms of
the one-holed torus covering space of a disc with three branch points 
of order two. (Compare \cite{Erg} for details.)

\makeatletter \renewcommand{\@biblabel}[1]{\hfill#1.}\makeatother

\end{document}